\documentclass[12pt]{amsart}
\usepackage{amscd}
\usepackage{graphicx}
%
%
%

%
%
\def\frk{\frak}               

\def\Phi{{\frk n}}
\def\Phi{{\frk N}}
%
%

%
\def\opn#1#2{\def#1{\operatorname{#2}}} 
%
\opn\chara{char} \opn\length{\ell} \opn\pd{pd} \opn\rk{rk}
\opn\projdim{proj\,dim} \opn\injdim{inj\,dim} \opn\rank{rank}
\opn\depth{depth} \opn\grade{grade} \opn\height{height}
\opn\embdim{emb\,dim} \opn\codim{codim}

\opn\Tr{Tr} \opn\bigrank{big\,rank}
\opn\superheight{superheight}\opn\lcm{lcm}
\opn\trdeg{tr\,deg}
\opn\reg{reg} \opn\lreg{lreg} \opn\ini{in} \opn\lpd{lpd}
\opn\size{size}
%
\opn\div{div} \opn\Div{Div} \opn\cl{cl} \opn\Cl{Cl}
%
%
\opn\Spec{Spec} \opn\Supp{Supp} \opn\supp{supp} \opn\Sing{Sing}
\opn\Ass{Ass} \opn\Min{Min}
%
%
\opn\Ann{Ann} \opn\Rad{Rad} \opn\Soc{Soc}
%
%
\opn\Im{Im} \opn\Ker{Ker} \opn\Coker{Coker} \opn\Am{Am}
\opn\Hom{Hom} \opn\Tor{Tor} \opn\Ext{Ext} \opn\End{End}
\opn\Aut{Aut} \opn\id{id}

\opn\nat{nat}
\opn\pff{pf}
\opn\Pf{Pf} \opn\GL{GL} \opn\SL{SL} \opn\mod{mod} \opn\ord{ord}
\opn\Gin{Gin} \opn\Hilb{Hilb}
%
%
\opn\aff{aff} \opn\con{conv} \opn\relint{relint} \opn\st{st}
\opn\lk{lk} \opn\cn{cn} \opn\core{core} \opn\vol{vol}
\opn\link{link} \opn\star{star}
\opn\gr{gr}

%
%

\def\pot#1#2{#1[\kern-0.28ex[#2]\kern-0.28ex]}

%
%
\opn\dirlim{\underrightarrow{\lim}}
\opn\inivlim{\underleftarrow{\lim}}
%
%
%

%
%

\def\Implies{\ifmmode\Longrightarrow \else
        \unskip${}\Longrightarrow{}$\ignorespaces\fi}
\def\implies{\ifmmode\Rightarrow \else
        \unskip${}\Rightarrow{}$\ignorespaces\fi}
\def\iff{\ifmmode\Longleftrightarrow \else
        \unskip${}\Longleftrightarrow{}$\ignorespaces\fi}

\let\:=\colon
\newtheorem{Theorem}{Theorem}[section]
\newtheorem{Lemma}[Theorem]{Lemma}
\newtheorem{Corollary}[Theorem]{Corollary}
\newtheorem{Proposition}[Theorem]{Proposition}
\newtheorem{Remark}[Theorem]{Remark}

\newtheorem{Definition}[Theorem]{Definition}

%
%
\let\epsilon\varepsilon
\let\phi=\varphi
\let\kappa=\varkappa
%
%
\textwidth=15cm \textheight=22cm \topmargin=0.5cm
\oddsidemargin=0.5cm \evensidemargin=0.5cm \pagestyle{plain}
%
%
\def\qed{\ifhmode\textqed\fi
      \ifmmode\ifinner\quad\qedsymbol\else\dispqed\fi\fi}
\def\textqed{\unskip\nobreak\penalty50
       \hskip2em\hbox{}\nobreak\hfil\qedsymbol
       \parfillskip=0pt \finalhyphendemerits=0}
\def\dispqed{\rlap{\qquad\qedsymbol}}

%
\opn\dis{dis}
\def\pnt{{\raise0.5mm\hbox{\large\bf.}}}

\opn\Lex{Lex}


\begin{document}
\title{Spanning Simplicial Complexes of Uni-Cyclic Graphs}

\author{Imran Anwar$^1$,\ \ Zahid Raza$^2$, Agha Kashif$^2$ }
 \thanks{ {\bf 1.} COMSATS Institute of Information Technology Lahore,
         Pakistan.\\ {\bf 2.} National University of Computer and Emerging Sciences Lahore Campus, Pakistan }
\email { imrananwar@ciitlahore.edu.pk,\ zahid.raza@nu.edu.pk,\
ajeekas12@hotmail.com}
 \maketitle
\begin{abstract}
In this paper, we introduce the concept of {\em spanning simplicial
complexes $\Delta_s(G)$} associated to a simple finite connected
graph  $G$. We give the characterization of all spanning trees of
the {\em uni-cyclic graph} $U_{n,m}$. In particular, we give the
formula for computing the Hilbert series and $h$-vector of the
Stanley Riesner ring $k\big[\Delta_s(U_{n,m})\big]$. Finally, we
prove that the {\em spanning simplicial complex} $\Delta_s(U_{n,m})$
is shifted hence $\Delta_s(U_{n,m})$ is shellable.

 \noindent
  {\it Key words } : Primary Decomposition, Hilbert Series, $f$-vectors, $h$-vectors, spanning Trees.\\
 {\it 2000 Mathematics Subject Classification}: Primary 13P10, Secondary
13H10, 13F20, 13C14.
\end{abstract}

\section{introduction}
Suppose $G(V,E)$ is a finite simple connected graph with the vertex
set $V$ and edge-set $E$, a spanning tree of a simple connected
finite graph $G$ is a subgraph of $G$ that contains every vertex of
$G$ and is also a tree. We represent the edge-set of all spanning
trees of a graph $G$ by $s(G)$. In this paper, for a finite simple
connected graph $G(V,E)$, we introduce the concept of {\em spanning
simplicial complexes} by associating a simplicial complex
$\Delta_s(G)$ defined on the edge set $E$  of the graph $G$ as
follows:
$$\Delta_s(G)=\big\langle F_i \mid F_i \in s(G)\big\rangle$$

It is always possible to associate $\Delta_s(G)$ to any simple
finite connected graph $G(V,E)$ but the characterization of $s(G)$
has been a problem in this regard.

For the {\em uni-cyclic graphs $U_{n,m}$}, we prove some algebraic
and combinatorial properties of {\em spanning simplicial complex}
$\Delta_s(U_{n,m})$. Where, a uni-cyclic graph $U_{n,m}$ is a
connected graph over $n$ vertices and containing exactly one cycle
of length $m$. In Proposition \ref{scn}, we give the
characterization of $s(U_{n,m})$.  Moreover, we give
characterizations of the $f-vector$ and $h-vector$ in Lemma
\ref{fsc} and Theorem \ref{hch} respectively, which enable us to
device a formula to compute the {\em Hilbert series} of the Stanley
Reisner ring $k\big[\Delta_s(U_{n,m})\big]$ in Theorem \ref{Hil}. In
the Theorem \ref{shift}, we show that the {\em spanning simplicial
complex} $\Delta_s(U_{n,m})$ is {\em shifted}. So, we have the
corollary \ref{shell} that the {\em spanning simplicial complex}
$\Delta_s(U_{n,m})$ is {\em shellable}.
\section{Basic Setup}
In this section, we give some basic definitions and notations which
we will follow in this paper.
\begin{Definition}\label{spa}
{\em  A spanning tree of a simple connected finite graph $G(V,E)$ is
a subtree of $G$ that contains every vertex of $G$.

We represent the collection of all edge-sets of the spanning trees
of $G$ by $s(G)$, in other words;
$$s(G)=\{E(T_i)\subset E ,    \hbox {\, where $T_i$ is a spanning tree of $G$}  \}.$$
}
\end{Definition}
\begin{Remark}\label{spe}
{\em It is well known that for any simple finite connected graph
spanning tree always exist. One can find a spanning tree
systematically by {\em cutting-down method}, which says that
spanning tree of a given simple finite connected graph is obtained
by removing one edge from each cycle appearing in the graph.

For example by using {\em cutting-down method} for the graph given
in figure 1 we obtain:
$$s(G)=\{ \{ e_2, e_3, e_4\} ,\{e_1, e_3, e_4\}, \{e_1, e_2, e_4\}, \{e_1, e_2, e_3\} \}$$
} \end{Remark}


\begin{center}
\begin{picture}(300,80)\label{fig}
\thicklines \put(100,27){\line(1,0){69.6}}\put(100,60){\line(1,0){70}}
\put(97,25){$\bullet$}\put(97,18){${v_{1}}$}
\put(99,27){\line(0,1){33}}\put(97,57){$\bullet$}\put(97,64){$v_{4}$} \put(88,41){$e_{4}$}\put(171,41){$e_{2}$}
\put(132,18){${e_{1}}$}
\put(132,64){$e_{3}$}
\put(165.5,25){$\bullet$}\put(166,18){${v_{2}}$}
\put(168,27){\line(0,1){33}}\put(165.5,57){$\bullet$}\put(165,64){$v_{3}$}
\put(116,-4){\tiny Fig. 1 . $C_4$}
\end{picture}
\end{center}

\begin{Definition}{\em
A Simplicial complex $\Delta$ over a finite set
$[n]=\{1, 2,\ldots,n \}$ is a collection of subsets of $[n]$, with
the property that $\{i\}\in \Delta$ for all $i\in[n]$, and if $F\in
\Delta$  then $\Delta$ will contain all the subsets of $F$
(including the empty set). An element of $\Delta$ is called a face
of $\Delta$, and the dimension of a face $F$ of $\Delta$ is defined
as $|F|-1$, where $|F|$ is the number of vertices of $F$. The
maximal faces of $\Delta$ under inclusion are called facets of
$\Delta$. The dimension of the simplicial complex $\Delta$ is :
$$\hbox{dim} \Delta = \max\{\hbox{dim} F | F \in \Delta\}.$$
We denote the simplicial complex $\Delta$ with facets $\{F_1,\ldots
, F_q\}$ by $$\Delta = \big\langle F_1,\ldots, F_q\big\rangle $$ }
\end{Definition}
\begin{Definition}\label{fvec}{\em
For a simplicial complex $\Delta$ having dimension $d$, its
$f-vector$ is a $d+1$-tuple, defined as:
$$f(\Delta)=(f_0,f_1,\ldots,f_d)$$
where $f_i$ denotes the number of $i-dimensional$ faces of $\Delta.$
}\end{Definition}

\begin{Definition}\label{co}{\bf (Spanning Simplicial Complex )}\\
{\em
For a simple finite connected graph $G(V,E)$ with $s(G)=\{E_1,
E_2,\ldots,E_s\}$ be the edge-set of all possible spanning trees of
$G(V,E)$, we define a simplicial complex $\Delta_s(G)$ on $E$ such
that the facets of $\Delta_s(G)$ are precisely the elements of
$s(G)$, we call $\Delta_s(G)$ as the {\em spanning simplicial
complex} of $G(V,E)$. In other words;
$$\Delta_s(G)=\big\langle E_1,E_2,\ldots,E_s\big\rangle.$$
}\end{Definition}
For example; the spanning simplicial complex of
the graph $G$ given in figure 1 is:
$$\Delta_s(G)=\big\langle\{ e_2, e_3, e_4\} ,\{e_1, e_3, e_4\}, \{e_1, e_2, e_4\}, \{e_1, e_2, e_3\}\big\rangle$$

We conclude this section with the definition of {\em uni-cyclic
graph} $U_{n,m}$;
\begin{Definition}{\em
A {\em uni-cyclic graph} $U_{n,m}$ is a connected graph on $n$
vertices, and containing exactly one cycle  of length $m$ (with
$m\leq n$)}.
\end{Definition}
The number of vertices in $U_{n,m}$ equals the number of
edges. In particular, if $m=n$ then $U_{n,m}$ is simply $n$-cyclic
graph.

\section{Spanning trees of $U_{n,m}$ and Stanley-Reisner ring $\Delta_s(U_{n,m})$ }

Throughout the paper, we fix the edge-labeling $\{e_1, e_2, \ldots,
e_m,e_{m+1},\ldots,e_n\}$ of $U_{n,m}$ such that $\{e_1, e_2, \ldots,
e_m\}$ is the edge-set of the only cycle in $U_{n,m}$. In the
following result, we give the characterization of $s(U_{n,m})$.

\begin{Lemma}\label{scn}{\bf Characterization of $s(U_{n,m})$}\\  {\em
Let $U_{n,m}$ be the {\em uni-cyclic graph} with the edge set
$E=\{e_1, e_2,\ldots , e_n\}$. A subset $E(T_i)\subset E$ will belong to
$s(U_{n,m})$ if and only if $T_i=E\setminus \{e_i\}$ for some
$i\in\{1,\ldots,m\}$. In particular;
$$s(U_{n,m})=\{\hat{E_i}\, \mid\, \hat{E_i}=E\setminus \{e_i\} \hbox{\ for all\, }1\leq i\leq m \}$$

}\end{Lemma}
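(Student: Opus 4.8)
The plan is to combine the edge-count characterization of trees with the special structure of a uni-cyclic graph. First I would recall that a spanning tree of a graph on $n$ vertices is a connected acyclic spanning subgraph, and hence must have exactly $n-1$ edges. Since $U_{n,m}$ has $n$ vertices and (as already noted) $n$ edges, any spanning tree is obtained by deleting exactly one edge; that is, every element of $s(U_{n,m})$ has the form $E\setminus\{e_i\}$ for some $i\in\{1,\ldots,n\}$. This reduces the lemma to deciding, for each $i$, precisely when $E\setminus\{e_i\}$ is a spanning tree.

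For the ``if'' direction, suppose $i\in\{1,\ldots,m\}$, so that $e_i$ lies on the unique cycle $C=(e_1,\ldots,e_m)$. Deleting $e_i$ destroys $C$, and since $C$ is the only cycle of $U_{n,m}$ the resulting subgraph $E\setminus\{e_i\}$ is acyclic. Moreover, an edge lying on a cycle is never a bridge, so the subgraph remains connected and still spans all $n$ vertices. A connected acyclic spanning subgraph is a spanning tree, so $E\setminus\{e_i\}\in s(U_{n,m})$.

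For the ``only if'' direction, suppose instead $i\in\{m+1,\ldots,n\}$, so $e_i$ is not a cycle edge. The key observation is that in a uni-cyclic graph every non-cycle edge is a bridge: since the only cycle uses exactly the edges $e_1,\ldots,e_m$, no edge $e_i$ with $i>m$ lies on any cycle, and an edge contained in no cycle is a cut edge. Deleting a bridge disconnects a connected graph, so $E\setminus\{e_i\}$ is disconnected and cannot be a spanning tree. (Equivalently, for $i>m$ the full cycle $C$ survives inside $E\setminus\{e_i\}$, so the subgraph contains a cycle and is not a tree.) Putting the two directions together yields $s(U_{n,m})=\{E\setminus\{e_i\}\mid 1\le i\le m\}$ exactly as claimed.

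I expect the point requiring the most care to be the bridge/cycle dichotomy invoked in the ``only if'' direction---namely, that in $U_{n,m}$ the non-cycle edges are precisely the bridges. This rests squarely on the defining hypothesis that $U_{n,m}$ has exactly one cycle, and one should state explicitly both that ``an edge on no cycle is a bridge'' and that ``removing a bridge from a connected graph produces two components.'' As a shortcut, the entire statement can instead be read off from the cutting-down method of Remark \ref{spe}: since $U_{n,m}$ contains a single cycle of length $m$, a spanning tree is produced by removing exactly one of its $m$ edges, which gives precisely the sets $E\setminus\{e_i\}$ for $1\le i\le m$.
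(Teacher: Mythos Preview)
Your proof is correct and in fact more detailed than the paper's. The paper's own argument is precisely the one-line shortcut you give at the end: it simply observes that $U_{n,m}$ has a single cycle of length $m$ and invokes the cutting-down method of Remark~\ref{spe} to conclude that the spanning trees are obtained by deleting one edge of that cycle. Your edge-count plus bridge/cycle-dichotomy argument is a sound fleshing-out of why the cutting-down method yields exactly these $m$ sets and no others.
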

\begin{proof}
As $U_{n,m}$ contains only one cycle of $m$ vertices, so its
spanning trees will be obtained by just removing one edge from the
cycle of $U_{n,m}$ follows from \ref{spe}. Which implies that
$$s(U_{n,m})=\{\hat{E_i}\, \mid\, \hat{E_i}=E\setminus \{e_i\} \hbox{\ for all\, }1\leq i\leq m \}$$
\end{proof}
We need the following elementary proposition in order to prove our
next result.

\begin{Proposition}\label{fd}
  \em {For a simplicial complex $\Delta$ over $[n]$ of dimension $d$, if $f_t$ = ${n} \choose t+1$ for some $t\leq d$
  then $f_i =   {{n}\choose {i+1}}$ for all $0 \leq i < t$. }
  \end{Proposition}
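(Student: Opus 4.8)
The plan is to exploit the one structural property that distinguishes a simplicial complex from an arbitrary family of subsets: closure under passage to subsets, i.e. if $F \in \Delta$ and $G \subseteq F$ then $G \in \Delta$. The key preliminary observation is that an $i$-dimensional face of $\Delta$ is exactly a subset of $[n]$ of cardinality $i+1$, so there are at most $\binom{n}{i+1}$ of them; thus $f_i \leq \binom{n}{i+1}$ for every $i$, with equality precisely when \emph{every} $(i+1)$-subset of $[n]$ is a face. The hypothesis $f_t = \binom{n}{t+1}$ is therefore nothing but the statement that all subsets of $[n]$ of size $t+1$ belong to $\Delta$.

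From here I would argue by a single extension step. Fix $i$ with $0 \leq i < t$ and let $S \subseteq [n]$ be any subset with $|S| = i+1$. Since $\dim \Delta = d \leq n-1$ we have $t+1 \leq d+1 \leq n$, so there is enough room to enlarge $S$ to a set $S'$ with $S \subseteq S' \subseteq [n]$ and $|S'| = t+1$. By the hypothesis $S'$ is a face of $\Delta$, and by closure under subsets $S$ is then a face as well. As $S$ ranged over all $(i+1)$-subsets of $[n]$, every such subset lies in $\Delta$, which gives $f_i = \binom{n}{i+1}$. Letting $i$ run over $0, 1, \ldots, t-1$ yields the claim.

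There is no genuine obstacle here; the statement is elementary and the proof is essentially forced once the counting bound $f_i \leq \binom{n}{i+1}$ is recorded. The only point requiring a moment's care is confirming that the extension of $S$ to a $(t+1)$-set is possible, which reduces to the automatic inequality $t+1 \leq n$ holding for any complex on $[n]$.
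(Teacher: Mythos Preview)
Your argument is correct and follows exactly the same route as the paper: interpret $f_t=\binom{n}{t+1}$ as saying all $(t+1)$-subsets of $[n]$ are faces, extend an arbitrary smaller subset to one of size $t+1$, and apply closure under subsets. The only difference is that you make explicit the inequality $t+1\le n$ justifying the extension step, which the paper leaves implicit.
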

  \begin{proof}
Suppose $\Delta$ be any simplicial complex over $[n]$ with dimension
$d$ having\linebreak $f_t={n\choose t+1}$ for some $t\leq d$. It
implies that $\Delta$ will contain all the subset of $[n]$ with the
cardinality $t+1$ (which is $f_t={n\choose t+1}$), then it is
sufficient to prove that $\Delta$ will contain every subset of $[n]$
with the cardinality $\mid i\mid$ with $i\leq t$. Let us take any
arbitrary subset $F$ of $[n]$ with  $\mid F\mid <t+1$, then by
adding more vertices to $F$ we can extend $F$ to $\tilde{F}$ with
$\mid \tilde{F} \mid = t+1$, which is already in $\Delta$ therefore
the assertion follows immediately from the definition of simplicial
complex. Hence $\Delta$ will contain all the subsets of $[n]$ with
the cardinality $\leq t$, that is
$$f_i={{n}\choose {i+1}} \hbox{\, for all\, } 0 \leq i < t.$$
   \end{proof}
Our next result is the characterization of the $f$-vector of
$\Delta_s(U_{n,m})$.
\begin{Proposition}\label{fsc}
  \em{Let $\Delta_s(U_{n,m})$ be the spanning simplicial complex of {\em uni-cyclic graph $U_{n,m}$}, then $dim(\Delta_s(U_{n,m})=n-2$ and having the following
  f-vector\linebreak
     $ f(\Delta_s(U_{n,m})= \big(f_0, f_1,\ldots, f_{n-2})\hbox{\, with\, }$
     $$f_{i}=\left\{
               \begin{array}{ll}
                 {n\choose{i+1}}, & \hbox{for}\,\,\,  i\leq m-2;\\\\
                 {n\choose i+1}-{{n-m}\choose{i-m+1}},& \hbox{for}\,\,  m-2<i\leq n-2.
               \end{array}
             \right.$$
  }\end{Proposition}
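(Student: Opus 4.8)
The plan is to work directly from the description of the facets furnished by Lemma~\ref{scn}. Since $s(U_{n,m})=\{\hat E_i \mid \hat E_i = E\setminus\{e_i\},\ 1\le i\le m\}$ and $|E|=n$, each facet $\hat E_i$ has exactly $n-1$ elements, so every facet has dimension $(n-1)-1=n-2$; as all facets share the same cardinality, this already yields $\dim\Delta_s(U_{n,m})=n-2$. The crux is then to determine which subsets of $E$ are faces. A subset $F\subseteq E$ lies in $\Delta_s(U_{n,m})$ precisely when $F\subseteq \hat E_j$ for some $j\in\{1,\dots,m\}$, i.e.\ when $F$ omits at least one of the cycle edges $e_1,\dots,e_m$. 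Equivalently, $F$ is a \emph{non-face} if and only if $F$ contains the entire cycle edge-set $\{e_1,\dots,e_m\}$.

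With this characterization in hand, the counting becomes a complementation argument. To compute $f_i$, the number of faces of size $i+1$, I would subtract from the total number $\binom{n}{i+1}$ of $(i+1)$-subsets of $E$ the number of those that are non-faces. A non-face of size $i+1$ must contain all $m$ cycle edges and then choose its remaining $i+1-m$ edges among the $n-m$ non-cycle edges $e_{m+1},\dots,e_n$, so there are exactly $\binom{n-m}{i+1-m}$ of them. This gives $f_i=\binom{n}{i+1}-\binom{n-m}{i-m+1}$, valid whenever $i+1\ge m$, that is, in the range $m-2<i\le n-2$.

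For the lower range $i\le m-2$ I would invoke Proposition~\ref{fd}. When $i+1\le m-1$, no $(i+1)$-subset can possibly contain all $m$ cycle edges; in particular, taking $t=m-2$, every $(m-1)$-subset of $E$ is a face, so $f_{m-2}=\binom{n}{m-1}$. Proposition~\ref{fd} then forces $f_i=\binom{n}{i+1}$ for all $0\le i<m-2$, which establishes the first branch of the stated $f$-vector. (Alternatively, one sees directly that every subset of size at most $m-1$ is a face, reaching the same conclusion without \ref{fd}.)

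The step I expect to carry the real content is the characterization of faces as exactly the subsets omitting a cycle edge; once the non-faces are identified with the supersets of $\{e_1,\dots,e_m\}$, the remainder is elementary binomial bookkeeping. The only points needing care are the boundary index $i=m-2$, and the verification that the correction term $\binom{n-m}{i-m+1}$ vanishes exactly when $i+1<m$, so that under the convention $\binom{a}{b}=0$ for $b<0$ the two branches agree seamlessly at the cut.
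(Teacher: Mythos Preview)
Your proposal is correct and follows essentially the same approach as the paper: both identify the facets via Lemma~\ref{scn}, characterize the faces as exactly the subsets of $E$ not containing the full cycle $\{e_1,\dots,e_m\}$, count the non-faces of each size by a complementation argument, and invoke Proposition~\ref{fd} for the range $i\le m-2$. Your write-up is in fact slightly cleaner in making the face/non-face characterization explicit before the counting.
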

  \begin{proof}
  Let $E=\{e_1,e_2,\ldots,e_n\}$ be the set of edges of $U_{n,m}$, then
  from \ref{scn} ;
  $$s(U_{n,m})=\{\hat{E_i}\, \mid\, E_i=E\setminus \{e_i\} \hbox{\ for all\, }1\leq i\leq m \}.$$

  Therefore, by definition \ref{co} we have;
  $$\Delta_s(U_{n,m})=\big\langle \hat{E_1},\hat{E_2},\ldots,\hat{E_m}\big\rangle.$$

Since each facet $\hat{E_i}$ is of the same dimension $n-2$ ( as
$\mid \hat{E_i}\mid=n-1$), therefore $\Delta_s(U_{n,m})$ will be of
dimension $n-2$. Also, it is clear from the definition of
$\Delta_s(U_{n,m})$ that $\Delta_s(U_{n,m})$ contains all those
subsets of $E$ that do not contain $\{e_1,\ldots,e_m\}$.

Let us take any arbitrary subset $F\subset E$ consisting of $m-1$
members. As $F$ consists of $m-1$ elements, then it is clear that
$\{e_1,\ldots,e_m\}$ can not appear in $F$, therefor, $F\in
\Delta_s(U_{n,m})$. It follows that $\Delta_s(U_{n,m})$ contains all
possible subsets of $E$ with the cardinality $m-1$, therefore,
$f_{m-2}={n\choose m-1}$. Hence from \ref{fd}, we have
$f_i={n\choose{i+1}}$ for all
$i\leq m-2$.

In order to prove the other case, we need to compute all the subsets
of $F\subset E$ with $\mid F\mid=i(\geq m)$ containing the cycle
$\{e_1,\ldots,e_m\}$. We have in total $n$ elements in $E$ and we
are choosing $i$-elements out of it with the condition that
$\{e_1,\ldots, e_m\}$ will be a part of it. By using the inclusion
exclusion principle, we get that there are ${n-m}\choose{i-m+1}$
subsets of $E$ consisting of $i+1\, (\geq m)$ elements and
containing the cycle $\{e_1, \ldots,e_m\}$. In total, we have
$n\choose{i+1}$ subsets of $E$ with the cardinality $i+1$,
therefore, we have the $f_i={n\choose i+1}-{{n-m}\choose{i-m}+1}$
for $m-2< i\leq n-2$.
\end{proof}

For a simplicial complex $\Delta$ over $[n]$, one would associate to it the
Stanley-Reisner ideal, that is, the monomial ideal $I_{\mathcal N}(\Delta)$ in
$S=k[x_1, x_2,\ldots ,x_n]$ generated by monomials corresponding to
non-faces of this complex (here we are assigning one variable of the
polynomial ring to each vertex of the complex). It is well known
that the Stanley-Reisner ring $k[\Delta]=S/I_{\mathcal N}(\Delta)$
is a standard graded algebra. We refer the readers to \cite{HP} and
\cite{Vi} for more details about graded algebra $A$, the Hilbert
function $H(A,t)$ and the Hilbert series $H_t(A)$ of a graded algebra.
\begin{Definition}\label{hvec}{\em
Let $A$ be a standard graded algebra and
$$h(t)=h_0+h_1t+\cdots+h_rt^r$$ the (unique) polynomial with
integral  coefficients such that $h(1)\neq 0$ and satisfying
$$H_t(A)=\frac{h(t)}{(1-t)^d}$$
where $d=\mbox{dim}(A)$. The $h$-vector of $A$ is defined by
$h(A)=(h_0,\ldots,h_r)$. }
\end{Definition}
 Now we give the formula for the $h$-vector of $k\big[\Delta_s(U_{n,m})\big]$;

\begin{Theorem}\label{hch} {\em
If $\Delta_s(U_{n,m})$ is a spanning simplicial complex of the
uni-cyclic graph $U_{n,m}$ and $(h_i)$ is the $h$-vector of
$k\big[\Delta_s(U_{n,m})\big]$, then $h_k=0$ for $k>n-1$ and
$$h_k=\left\{
        \begin{array}{ll}
          \sum\limits_{i=0}^{k} (-1)^{k-i}{n-1-i\choose k-i}{n\choose i}, & \hbox{for}\,\, k\leq m-1; \\
          \sum\limits_{i=0}^{k} (-1)^{k-i}{n-1-i\choose k-i}\big[{n\choose i}-{{n-m}\choose{i-m}}\big], &
\hbox{for}\,\, m-1<k\leq n-1.
        \end{array}
      \right.
$$}
\end{Theorem}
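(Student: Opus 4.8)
The plan is to reduce everything to the standard relationship between the $f$-vector and the $h$-vector of a Stanley-Reisner ring, and then simply feed in the formula for $f\big(\Delta_s(U_{n,m})\big)$ obtained in Proposition \ref{fsc}. Recall that for a simplicial complex $\Delta$ of dimension $d$, the Hilbert series of $k[\Delta]$ has the well-known expansion
$$H_t\big(k[\Delta]\big)=\sum_{i=0}^{d+1}\frac{f_{i-1}\,t^i}{(1-t)^i},$$
where $f_{-1}=1$ accounts for the empty face. In our situation $\dim\Delta_s(U_{n,m})=n-2$ by Proposition \ref{fsc}, so the Krull dimension of $k\big[\Delta_s(U_{n,m})\big]$ is $n-1$; this already explains why the $h$-vector lives in degrees $0,\dots,n-1$ and hence why $h_k=0$ for $k>n-1$.

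First I would put this series over the common denominator $(1-t)^{n-1}$, which by Definition \ref{hvec} exhibits the numerator as $h(t)=\sum_{i=0}^{n-1} f_{i-1}\,t^i(1-t)^{n-1-i}$. Expanding each factor $(1-t)^{n-1-i}$ by the binomial theorem and collecting the coefficient of $t^k$ yields the universal identity
$$h_k=\sum_{i=0}^{k}(-1)^{k-i}\binom{n-1-i}{k-i}\,f_{i-1},$$
valid for every $k$. To confirm $h_k=0$ for $k>n-1$ one notes that each summand $t^i(1-t)^{n-1-i}$ in $h(t)$ has degree exactly $n-1$, so $h(t)$ has degree at most $n-1$; and since the only term surviving at $t=1$ is the top one, $h(1)=f_{n-2}=m\neq0$, matching the requirement $h(1)\neq0$ in Definition \ref{hvec} and confirming that $n-1$ is indeed the correct denominator exponent.

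It then remains to substitute the two-case description of $f_{i-1}$ coming from Proposition \ref{fsc}, namely $f_{i-1}=\binom{n}{i}$ for $i\le m-1$ and $f_{i-1}=\binom{n}{i}-\binom{n-m}{i-m}$ for $i\ge m$ (the index shift $f_i\mapsto f_{i-1}$ turning the correction $\binom{n-m}{i-m+1}$ of Proposition \ref{fsc} into $\binom{n-m}{i-m}$). When $k\le m-1$ every index $i$ in the sum satisfies $i\le m-1$, so only the first form occurs and we obtain exactly the first branch of the stated formula. When $m-1<k\le n-1$ the sum splits at $i=m$; using the convention $\binom{n-m}{i-m}=0$ for $i<m$ lets me extend the correction term down to $i=0$ and write both pieces under a single summation as $\binom{n}{i}-\binom{n-m}{i-m}$, giving the second branch.

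I do not expect a genuine obstacle here: the argument is essentially bookkeeping once the Hilbert-series-to-$h$-vector conversion is in place. The only points requiring care are the index shift between the $f$-vector convention of Proposition \ref{fsc} and the Hilbert-series formula, and the harmless extension of the second sum down to $i=0$ via vanishing binomials; the evaluation $h(1)=m$ doubles as a consistency check on the whole computation.
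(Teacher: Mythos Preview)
Your approach is essentially the same as the paper's: the paper simply cites the standard identity $h_k=\sum_{i=0}^{k}(-1)^{k-i}\binom{n-1-i}{k-i}f_{i-1}$ from \cite{Vi} and substitutes the $f$-vector of Proposition~\ref{fsc}, whereas you rederive this identity from the Hilbert-series expansion and add the verification $h(1)=f_{n-2}=m\neq 0$ and the careful handling of the index shift and case split. Your version is more detailed but not a different method.
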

\begin{proof}
We know from \cite{Vi} that, if $\Delta$ be any simplicial complex
of dimension $d$ and $(h_i)$ be the $h$-vector of $k[\Delta]$, then
$h_k=0$ for $k>d+1$ and
$$h_k=\sum_{i=0}^{k} (-1)^{k-i}{n-1-i\choose k-i}{f_{i-1}}\hbox{ \, for \, } 0\leq k\leq d+1$$
The result follows by substituting the values $f_i$'s ( from
\ref{fsc}) in the above formula.
\end{proof}

One of our main results of this section is as follows;
\begin{Theorem}\label{Hil} {\em Let $\Delta_s(U_{n,m}) $ be the spanning simplicial complex of
$U_{n,m}$, then the Hilbert series of the Stanley-Reisner ring
$k\big[\Delta_s(U_{n,m})\big]$ is given by,
$$H(k[\Delta_s(U_{n,m})],t)=
\sum_{i=0}^{m-2}\frac{{n\choose
{i+1}}{t^{i+1}}}{(1-t)^{i+1}}+\sum_{i=m-1}^{n-2}\frac{\big[{{n\choose
i+1}-{{n-m}\choose{i-m+1}}}\big]{t^{i+1}}}{(1-t)^{i+1}}+1.$$ }
\end{Theorem}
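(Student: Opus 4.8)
The plan is to apply the standard description of the Hilbert series of a Stanley--Reisner ring in terms of its $f$-vector, and then to substitute the values computed in Proposition \ref{fsc}. Recall from \cite{Vi} that for an arbitrary simplicial complex $\Delta$ of dimension $d$ with $f$-vector $(f_0,\ldots,f_d)$, and with the usual convention $f_{-1}=1$ accounting for the empty face, the Hilbert series of $k[\Delta]$ is
$$H(k[\Delta],t)=\sum_{i=-1}^{d}f_i\frac{t^{i+1}}{(1-t)^{i+1}}=1+\sum_{i=0}^{d}f_i\frac{t^{i+1}}{(1-t)^{i+1}},$$
where the term $i=-1$ contributes exactly the constant $1$. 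This is the same underlying identity from which the $h$-vector formula of Theorem \ref{hch} is derived, so no new input is needed beyond what is already cited.

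By Proposition \ref{fsc} we have $\dim\Delta_s(U_{n,m})=n-2$, so $d=n-2$ and the index $i$ in the sum above ranges over $0\le i\le n-2$. The next step is to break this range at $i=m-2$, in agreement with the two cases of the piecewise formula for $f_i$ in Proposition \ref{fsc}. On the lower range $0\le i\le m-2$ I would substitute $f_i={n\choose i+1}$, and on the upper range $m-1\le i\le n-2$ I would substitute $f_i={n\choose i+1}-{{n-m}\choose i-m+1}$. Reassembling the two resulting sums together with the separated constant term $f_{-1}=1$ yields precisely the claimed expression for $H(k[\Delta_s(U_{n,m})],t)$.

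Since the whole argument is a direct substitution, there is no serious obstacle; the only point demanding care is the bookkeeping of the indexing conventions. Specifically, I must make sure the empty-face contribution $f_{-1}=1$ is correctly peeled off as the trailing additive constant, and that the breakpoint between the two binomial regimes is placed so that the cycle-obstruction term ${{n-m}\choose i-m+1}$ first enters at $i=m-1$, consistently with Proposition \ref{fsc}. Once these conventions are aligned, the two displayed sums in the theorem match the general formula term by term and the proof is complete.
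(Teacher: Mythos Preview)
Your proposal is correct and follows essentially the same approach as the paper: invoke the standard formula $H(k[\Delta],t)=1+\sum_{i=0}^{d}\frac{f_i t^{i+1}}{(1-t)^{i+1}}$ from \cite{Vi} and substitute the $f$-vector computed in Proposition \ref{fsc}, splitting the sum at $i=m-2$. The paper's own proof is in fact slightly terser than yours, omitting the explicit discussion of the $f_{-1}=1$ convention and the breakpoint bookkeeping.
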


\begin{proof}
We know from \cite{Vi} that if $\Delta$ be any simplicial complex of
dimension $d$ with\linebreak $f(\Delta)=(f_0, f_1, \ldots,f_d)$ be
its $f$-vector, then the Hilbert series of Stanley-Reisner ring
$k[\Delta]$ is given by
$$H(k[\Delta],t)=
\sum_{i=0}^{d}\frac{f_i t^{i+1}}{(1-t)^{i+1}}+1.$$ The result
immediately follows by substituting the values of $f_i$'s in the
above formula from \ref{fsc}.
\end{proof}

Algebraic shifting theory was introduced by G. Kalai in \cite{K}, and it describes strong tolls to investigate
one of the most interesting and powerful property of simplicial complexes.
\begin{Definition}{\em  A simplicial
complex $\Delta$ on $[n]$ is {\em shifted} if, for $F\in \Delta$,
$i\in F$ and $j\in [n]$ with $j>i$, one has
$(F\setminus\{i\})\cup\{j\}\in \Delta$.}
\end{Definition}

\begin{Theorem}\label{shift}
{\em  The spanning simplicial complex $\Delta_s(U(m,n))$ of the {\em
uni-cyclic graph} is  {\em shifted}.

}\end{Theorem}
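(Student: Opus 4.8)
The plan is to fix the natural total order on the ground set $[n]$ by identifying the vertex $e_i$ of $\Delta_s(U_{n,m})$ with the integer $i$, and then to verify the shifting condition directly against a clean description of the faces of the complex. The whole argument hinges on translating the geometric/combinatorial statement into a statement about which indices a subset of $E$ is permitted to contain.

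First I would record the following characterization, which is already implicit in the proof of Proposition \ref{fsc}: a subset $F\subseteq E$ lies in $\Delta_s(U_{n,m})$ if and only if $F$ does not contain the whole cycle, that is, $\{e_1,\ldots,e_m\}\not\subseteq F$. Indeed, by Lemma \ref{scn} the facets are exactly the sets $\hat E_i=E\setminus\{e_i\}$ for $1\le i\le m$, so $F$ is a face precisely when $F\subseteq \hat E_i$ for some such $i$, i.e. when $e_i\notin F$ for some $i\in\{1,\ldots,m\}$. Equivalently, $F$ is a face iff it omits at least one of the first $m$ edges. This reduces the shiftedness check to a purely index-theoretic condition.

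Next I would take an arbitrary face $F\in\Delta_s(U_{n,m})$ together with $i\in F$ and $j\in[n]$ satisfying $j>i$, put $G=(F\setminus\{i\})\cup\{j\}$, and show $G\in\Delta_s(U_{n,m})$. If $j\in F$ then $G\subseteq F$ and membership is immediate, so I would assume $j\notin F$. By the characterization there is an index $k\in\{1,\ldots,m\}$ with $e_k\notin F$, and the goal is to exhibit a missing cycle edge for $G$ as well.

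The main case distinction, which is where the argument really lives, is whether the newly inserted edge $e_j$ happens to be the one that was missing. If $j\neq k$, then $e_k$ is still absent from $G$, since deleting $e_i$ and adjoining a single edge $e_j\neq e_k$ cannot produce $e_k$; hence $G$ omits a cycle edge and is a face. The only delicate situation is $j=k$, i.e. when the shift inserts the previously missing cycle edge; here I would exploit the ordering. From $j=k\le m$ and $i<j$ we get $i<m$, so $i$ is itself a cycle-edge index; but $e_i$ has just been removed and $j\neq i$, whence $e_i\notin G$ with $i\le m$, and again $G$ omits a cycle edge and lies in $\Delta_s(U_{n,m})$. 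This is the crux: completing the cycle at a position $j\le m$ forces vacating the strictly smaller — and therefore also cyclic — position $i$, so a shift toward larger indices can never assemble all of $\{e_1,\ldots,e_m\}$ simultaneously. I expect this implication ``$j\le m$ forces $i\le m$'' to be the only real obstacle; the remainder is routine bookkeeping on the face characterization.
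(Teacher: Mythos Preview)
Your argument is correct. The face characterization ``$F\in\Delta_s(U_{n,m})$ iff $\{e_1,\dots,e_m\}\not\subseteq F$'' is exactly right, and your case split on whether the inserted index $j$ equals the chosen missing cycle index $k$ is clean; the key step---that $j=k\le m$ forces $i<j\le m$, so the removed edge $e_i$ is itself a cycle edge---is precisely the crux, and you identified it correctly.

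The paper takes a closely related but slightly different route: rather than working with arbitrary faces via the characterization, it asserts that it suffices to verify the shifted condition on facets and then observes that for a facet $\hat E_i$ the only element outside it is $i$, so the only nontrivial swap is $(\hat E_i\setminus\{j\})\cup\{i\}=\hat E_j$ with $j<i\le m$. Your approach is a touch more self-contained (you never invoke the reduction to facets, which the paper states without justification), while the paper's approach is shorter once that reduction is granted. Both rest on the same observation that replacing a smaller index by a larger one cannot complete the cycle $\{1,\dots,m\}$.
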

\begin{proof}
From \ref{co} and \ref{scn}, we know that
$$\Delta_s(U_{n,m})=\big\langle{\hat{E_1}},{\hat{E_2}},\ldots, \hat{{E_m}}\big\rangle.$$
It is sufficient to prove the shifted condition on the facets of the
simplicial\linebreak complex. For some facet ${\hat{E_i}}\in
\Delta_s(U_{n,m})$ with $j\in {\hat{E_i}}$, we claim that\linebreak
$({\hat{E_i}}\setminus \{j\})\cup\{k\}\in \{\hat{E_1}, \hat{E_2},
\ldots, \hat{E_m}\}$ with $k\not\in{\hat{E_i}} $ and $j<k$. By the
definition of  $\hat{E_i}$, we have only one possibility for $k$
that is $k=i$ and $j<i\leq m$, therefore, it is easy to see that
$({\hat{E_i}}\setminus \{j\})\cup\{i\}=\hat{E_i}$ for all $j(<i)\in
\hat{E_i}$. Hence $\Delta_s(U_{n,m})$ is a shifted simplicial
complex.
\end{proof}
The above theorem immediately implies the following result;
\begin{Corollary}\label{shell}
{\em The spanning simplicial complex $\Delta_s(U(m,n))$ is
shellable.}
\end{Corollary}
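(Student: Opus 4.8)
The plan is to prove that $\Delta_s(U_{n,m})$ is shifted by verifying the defining exchange condition directly on the facets, and then invoke the standard fact that every shifted complex is shellable to obtain the corollary. The crucial preliminary observation is the explicit description of the facets from Lemma \ref{scn}: each facet has the form $\hat{E_i}=E\setminus\{e_i\}$ for $1\le i\le m$, so a facet is determined entirely by which single cycle-edge it omits, and the omitted index lies in $\{1,\dots,m\}$ while all edges $e_{m+1},\dots,e_n$ (the tree part) belong to every facet.

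First I would reduce the shiftedness check to facets. Since shiftedness requires that for every face $F$, every $i\in F$ and every $j>i$ the set $(F\setminus\{e_i\})\cup\{e_j\}$ again lies in $\Delta_s(U_{n,m})$, and since any face is contained in some facet, it suffices to confirm the exchange property produces a face (equivalently, a subset of some facet) whenever we start from a facet. Working with the facet $\hat{E_i}=E\setminus\{e_i\}$, I would consider an element $e_j\in\hat{E_i}$ and a target index $k$ with $k>j$; the only interesting situation is $e_k\notin\hat{E_i}$, which forces $e_k=e_i$, i.e.\ $k=i$. Because $e_j\in\hat{E_i}$ and $j<i\le m$, the swap gives $(\hat{E_i}\setminus\{e_j\})\cup\{e_i\}=E\setminus\{e_j\}=\hat{E_j}$, which is again a facet since $j\le m$. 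When $e_k$ already lies in $\hat{E_i}$ the swapped set is a subset of $\hat{E_i}$ and hence a face. This exhausts all cases and establishes the shifted property.

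The one place demanding care—what I expect to be the main obstacle—is the bookkeeping of indices: one must be sure that the replacement index $i$ always satisfies $i\le m$ so that $\hat{E_i}$ is genuinely a facet, and that no swap can ever introduce an edge outside $E$ or force omission of a tree-edge $e_{m+1},\dots,e_n$. The labelling convention fixed at the start of the section, namely that $\{e_1,\dots,e_m\}$ is exactly the cycle, is what makes this work: since every facet contains all tree-edges, an upward swap $j\mapsto k$ with $k>j$ can only land on a missing cycle-edge, and there is exactly one such edge, $e_i$. Thus the exchange is not merely possible but uniquely determined, and it always returns a legitimate facet or a subface.

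For the corollary I would simply cite the theorem of Björner–Wachs (equivalently the result recorded in Kalai's work \cite{K}) that a shifted simplicial complex is always shellable; combined with Theorem \ref{shift} this immediately yields that $\Delta_s(U_{n,m})$ is shellable, with no further computation required.
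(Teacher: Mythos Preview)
Your proposal is correct and follows essentially the same route as the paper: you reduce the shiftedness check to facets, observe that the only nontrivial swap on $\hat{E_i}$ replaces some $e_j$ with the unique missing edge $e_i$ (so $j<i\le m$) yielding $\hat{E_j}$, and then invoke the standard implication shifted $\Rightarrow$ shellable. Your write-up is in fact slightly more careful than the paper's, which contains the typo $(\hat{E_i}\setminus\{j\})\cup\{i\}=\hat{E_i}$ where $\hat{E_j}$ is meant, and which omits the trivial case $e_k\in\hat{E_i}$ that you handle explicitly.
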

 \vspace{1 pt}

\end{document}